\theoremstyle{plain}
\newtheorem{theorem}{Theorem}[section]
\newtheorem{proposition}[theorem]{Proposition}
\newtheorem{lemma}[theorem]{Lemma}
\theoremstyle{definition}
\newtheorem{definition}[theorem]{Definition}
\newtheorem{remark}[theorem]{Remark}
\newcommand{\sheaf}[1]{\mathscr{#1}}
\newcommand{\PP}{\sheaf{P}}
\newcommand{\CC}{\sheaf{C}}
\newcommand{\GG}{\sheaf{G}}
\newcommand{\Z}{\mathbb Z}
\newcommand{\C}{\mathbb C}
\renewcommand{\P}{\mathbb P}
\newcommand{\F}{\mathbb{F}}
\DeclareMathOperator{\rk}{\mathrm{rk}}
\newcommand{\StabH}{H_{\mathrm{s}}}
\newcommand{\et}{\mathrm{\acute{e}t}}
\begin{document}

\title[Stable cohomology of central extensions]{On stable cohomology of central extensions
of elementary abelian groups}





\author[Bogomolov]{Fedor Bogomolov}
\address{Fedor Bogomolov, Courant Institute of Mathematical Sciences,
New York University\\ 
251 Mercer Street\\ 
New York, NY 10012, USA}
\email{bogomolo@cims.nyu.edu}

\author[B\"ohning]{Christian B\"ohning}
\address{Christian B\"ohning, Mathematics Institute, University of Warwick\\
Coventry CV4 7AL, England}
\email{C.Boehning@warwick.ac.uk}

\author[Pirutka]{Alena Pirutka}
\address{Alena Pirutka, Courant Institute of Mathematical Sciences,
New York University\\ 
251 Mercer Street\\ 
New York, NY 10012, USA}
\email{pirutka@cims.nyu.edu}

\begin{abstract}
We study when kernels of inflation maps associated to extraspecial $p$-groups in stable group cohomology are generated by their degree two components. This turns out to be true if the prime is large enough compared to the rank of the elementary abelian quotient, but false in general. 
\end{abstract}

\maketitle

\section{Introduction and statement of results}\label{sIntro}

Throughout below $k$ will be an algebraically closed field of characteristic $l\ge 0$ and $p$ will be a prime number assumed to be different from $l$ if $l$ is positive. Let $G$ be a finite $p$-group. One defines the stable cohomology $H^*_{\mathrm{s}, k} (G, \Z/p) = H^*_{\mathrm{s}}(G, \Z/p)$ in the following way (this does depend on $k$, but we suppress it from the notation when there is no risk of confusion): for a finite-dimensional generically free linear $G$-representation $V$, let $V^L\subset V$ be the open subset where $G$ acts freely. Then the ideal $I_{G, \mathrm{unstable}}$ in the group cohomology ring $H^* (G, \Z/p)$ is defined to be, equivalently, the kernel of the natural homomorphism
\begin{gather}\label{fUnstable1}
H^* (G, \Z/p) \to H^* (\mathrm{Gal}(k (V/G)), \Z/p)
\end{gather}
or, more geometrically, the kernel of 
\begin{gather}\label{fUnstable2}
H^* (G, \Z/p) \to \varinjlim\limits_{U\subset V^L/G} H^*_{\et} (U, \Z/p)
\end{gather}
where $U$ runs over all nonempty Zariski open subsets of $V^L/G$. 

\begin{definition}\label{dStableCohomology}
We define $H^*_{\mathrm{s}}(G, \Z/p)$ as 
\[
H^*_{\mathrm{s}}(G, \Z/p) = H^* (G, \Z/p)/I_{G, \mathrm{unstable}}.
\]
\end{definition}

A priori, this seems to depend on the choice of $V$, but really does not \cite[Thm. 6.8]{Bo05}. We often identify $H^*_{\mathrm{s}}(G, \Z/p)$ with 
its image in $H^*(\mathrm{Gal}(k(V/G), \Z/p)$.

\begin{definition}\label{dUnramified}
Put $L=k(V/G)$. The unramified group cohomology 
\[
H^*_{\mathrm{nr}} (G, \Z/p) \subset H^*_{\mathrm{s}}(G, \Z/p)
\]
is defined as the intersection, inside $H^* (L, \Z/p)$, of $H^*_{\mathrm{s}}(G, \Z/p)$ and $H^*_{\mathrm{nr}}(L, \Z/p)$; here, as usual, $H^*_{\mathrm{nr}}(L, \Z/p)$ are those classes that are in the kernel of all residue maps associated to divisorial valuations of $L$, i.e. those corresponding to a prime divisor on some normal model of $L$. 
\end{definition}

In this article we study a rather special class of groups. 

\begin{definition}\label{dExtraspecial}
For a prime $p$, an extraspecial $p$-group $G$ is a $p$-group such that its center $Z(G)$ is cyclic of order $p$ and $G/Z(G)$ is a nontrivial elementary abelian group. 
\end{definition}

This differs a bit from the arguably most common definition using the Frattini subgroup \cite[4., \S 4, Def. 4.14]{Suz86}, but it is equivalent to it by \cite[4., \S 4, 4.16]{Suz86}. 

Thus each extraspecial $p$-group sits in an exact sequence
\begin{gather}\label{fExtensionExtraspecial}
\xymatrix{
1 \ar[r] & Z\ar[r] & G \ar[r]^{\pi} &  E \ar[r] & 1
}
\end{gather}
where $Z\simeq \Z /p$ is the center of the group $G$ and $E \simeq (\Z/p)^n$ is elementary abelian. Moreover, the skew-form given by taking the commutator of lifts of elements in $E$ 
\begin{align}\label{fCommutatorSkewForm}
\omega \colon & E \times E \to Z, \\ \nonumber
(x,y) & \mapsto [\tilde{x}, \tilde{y}]
\end{align}
must be a symplectic form if $G$ is extraspecial. Hence $n=2m$ and the order of $G$ is of the form $p^{1+2m}$ for some positive integer $m$. One can be much more precise and prove that for each given order $p^{1+2m}$ there are precisely two extraspecial $p$-groups of that given order, up to isomorphism \cite[III, \S 13, 13.7 and 13.8]{Hupp67} or \cite[Chapter 5, 5.]{Gor07}; but we do not need this detailed structure theory. We want to study the kernel of the ``inflation map"
\begin{gather}
K^G = \mathrm{ker} \left( \pi^* \colon H^*_{\mathrm{s}} (E, \Z/p ) \to H^*_{\mathrm{s}} (G, \Z/p) \right) .
\end{gather}
This is a graded ideal in the graded ring $H^*_{\mathrm{s}} (E, \Z/p )$ (graded by cohomological degree). 
It is natural to expect that this should be, in general, generated by its degree $2$ component, or, even more precisely, by the class $\omega\in \mathrm{Hom}(\Lambda^2 E, \Z/p)=H^2_{\mathrm{s}}(E, \Z/p)$ given by the extension; cf. also formula (\ref{fStableCohAbelian}) in Section (\ref{sProofs}) for the description of the stable cohomology of abelian groups. In fact, Tezuka and Yagita in \cite{TezYag11} study a very similar problem in \S 9, p. 4492 ff., see especially the problems they mention on p. 4494 top and bottom concerning what they cannot yet prove. Indeed, the expectation above is false in general (this is similar to the situation in ordinary group cohomology where conjectures that kernels of inflation maps associated to central extensions should always be the expected ones are false as well, see \cite[Prop. 9]{Rus92}). We show:

\begin{theorem}\label{tMain1}
Let $G$ be an extraspecial $p$-group of order $p^{1+2m}$ as above. Then, provided $p > m$, the ideal $K^G$ is generated by $K_2^G= \langle \omega\rangle $. 
\end{theorem}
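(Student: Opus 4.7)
By the computation of stable cohomology of elementary abelian groups recorded as formula (\ref{fStableCohAbelian}), $H^*_{\mathrm{s}}(E, \Z/p)$ is (for $p$ odd, which covers all relevant cases with $m \ge 2$) the exterior algebra $\Lambda^* E^\vee$, and $K^G$ is a graded ideal in $\Lambda^* E^\vee$ containing the class $\omega \in \Lambda^2 E^\vee$. The inclusion $(\omega) \subseteq K^G$ is automatic since $\pi^*$ is a ring map that kills the extension class. The work is the reverse inclusion $K^G \subseteq (\omega)$.

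The hypothesis $p > m$ enters through the Lefschetz structure on $\Lambda^* E^\vee$. Since $\omega$ is a non-degenerate alternating form, multiplication $L = \omega \wedge \cdot$ together with its symplectic adjoint $\Lambda$ and the weight operator $H|_{\Lambda^k E^\vee} = (m - k)\,\mathrm{id}$ form an $\mathfrak{sl}_2$-triple acting on $\Lambda^* E^\vee$. The Lefschetz isomorphism $L^{m-k}\colon \Lambda^k E^\vee \to \Lambda^{2m-k} E^\vee$ is valid in characteristic $p$ as long as the factorials $(m-k)!$ are invertible for every $k \ge 0$, which is precisely the assumption $p > m$. This yields the Lefschetz decomposition
\[
\Lambda^* E^\vee = \bigoplus_{k \ge 0} \omega^k \wedge P^{* - 2k}, \qquad P^* = \bigoplus_{j \le m} \ker\bigl(L^{m-j+1} \colon \Lambda^j E^\vee \to \Lambda^{2m-j+2} E^\vee\bigr),
\]
so that $\Lambda^* E^\vee / (\omega) \cong P^*$. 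The theorem is therefore equivalent to the injectivity of the composite $P^* \hookrightarrow \Lambda^* E^\vee \xrightarrow{\pi^*} H^*_{\mathrm{s}}(G, \Z/p)$.

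I would attempt this injectivity by induction on $m$. The base case $m = 1$ is classical: $G$ has order $p^3$, the extension $k(V/G)/k(V/E)$ is birational to a Severi--Brauer variety of prime degree $p$ with Brauer class $\omega$, and Amitsur-type results identify the kernel of restriction on the exterior part with $(\omega)$. For the inductive step, fix a symplectic splitting $E = E_1 \oplus E'$ with $\dim E_1 = 2$, which lifts to a central product decomposition $G = G_1 \ast G'$ of extraspecial groups of ranks $1$ and $m-1$. Given $\alpha \in P^*$ with $\pi^*(\alpha) = 0$, restrict to the preimages in $G$ of all corank-$2$ symplectic subspaces $E'' \subset E$ and apply the inductive hypothesis to obtain a compatible family of divisibilities of $\alpha|_{E''}$ by the restricted forms $\omega|_{E''}$; the primitivity of $\alpha$ should then force $\alpha = 0$. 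The main obstacle I anticipate is this final rigidity step --- showing that no nonzero primitive class in $\Lambda^* E^\vee$ can restrict to an element of $(\omega|_{E''})$ on \emph{every} corank-$2$ symplectic subspace $E''$. This step presumably consumes the hypothesis $p > m$ a second time, which is consistent with the fact, mentioned in the introduction, that the analogous statement fails for small $p$.
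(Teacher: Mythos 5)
Your first half agrees with the paper: the $\mathfrak{sl}_2$/Lefschetz decomposition of $\Lambda^* E^\vee$ under the hypothesis $p>m$ is exactly Proposition \ref{pDecompositionUnderSL2}, and it correctly reduces the theorem to showing that each primitive component $E_r$ ($0\le r\le m$) injects under $\pi^*$. The gap is in the second half. Your induction restricts a primitive class $\alpha$ of degree $r$ to the preimages of corank-$2$ symplectic subspaces $E''\subset E$ and asserts that ``$\alpha|_{E''}\in(\omega|_{E''})$ for all such $E''$'' should force $\alpha=0$. That rigidity statement is false precisely in the middle degree $r=m$: since $\dim E''=2(m-1)$ and $p>m-1$, part b) of Proposition \ref{pDecompositionUnderSL2} applied to $E''$ shows that \emph{every} class in $\Lambda^{m}(E'')^\vee$ already lies in the ideal generated by $\omega|_{E''}$, so the restrictions to corank-$2$ symplectic subspaces carry no information about a degree-$m$ primitive class and the inductive step cannot even begin there. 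You flag this step yourself as the anticipated obstacle, so the argument is incomplete on its own terms, and in the form stated it cannot be completed.

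The paper closes exactly this gap by a different mechanism. The set of classes in $E_r$ vanishing on all \emph{totally isotropic} subgroups $A\subset E$ is an $\mathrm{Sp}_{2m}(\F_p)$-submodule; by Premet--Suprunenko (Theorem \ref{tPremetSuprunenko}, whose hypothesis $p>m-\tfrac{r}{2}+1$ follows from $p>m$) the module $E_r$ is irreducible, and since $e_1\wedge\dots\wedge e_r\in E_r$ is visibly nonzero on the totally isotropic subgroup $\langle e_1^*,\dots,e_r^*\rangle$, that submodule is zero. The point of totally isotropic (rather than symplectic) subgroups is that the preimage $\tilde A=\pi^{-1}(A)$ is abelian of the same rank, so $H^*_{\mathrm{s}}(A,\Z/p)\to H^*_{\mathrm{s}}(\tilde A,\Z/p)$ is an isomorphism and any class nontrivial on some such $A$ survives $\pi^*$. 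To salvage your approach you would need to replace the corank-$2$ symplectic subspaces by totally isotropic ones and supply a detection statement of this representation-theoretic kind; divisibility bookkeeping over symplectic subspaces alone will not suffice.
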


On the other hand:

\begin{theorem}\label{tMain2}
Take $k=\C$. 
If $G_0$ is the extraspecial $2$-group of order $2^{1+6}$ that is the preimage, under the natural covering map
\[
\mathrm{Spin}_7\, (k) \to \mathrm{SO}_7 (k)
\]
of the diagonal matrices $\mathrm{diag} (\pm 1, \dots , \pm 1)$ in $\mathrm{SO}_7 (k)$, then $K^{G_0}$ is \emph{not} generated by its degree $2$ piece $K^{G_0}_2$; here again, $K ^{G_0}_2=\langle \omega \rangle$. 
\end{theorem}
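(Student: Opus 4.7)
The plan is to exhibit a class $\alpha \in H^3_s(E, \F_2) \cong \Lambda^3 E^\vee$ which lies in $K^{G_0}$ but not in the ideal $(\omega)$. First I would fix coordinates: after identifying the commutator pairing, choose a symplectic basis $x_1, \dots, x_6$ of $E^\vee$ so that $\omega = x_1 x_2 + x_3 x_4 + x_5 x_6$. Nondegeneracy of $\omega$ implies that multiplication by $\omega$ is injective on $E^\vee$, so $\omega \cdot E^\vee \subset \Lambda^3 E^\vee$ is a $6$-dimensional subspace of the $\binom{6}{3}=20$-dimensional ambient space. Hence any single element of $\Lambda^3 E^\vee$ outside this $6$-dimensional subspace that maps to zero under $\pi^*$ would suffice.

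Next I would leverage the embedding $G_0 \subset \mathrm{Spin}_7(k)$. Take $W$ to be a generically free $G_0$-representation containing the $8$-dimensional spin representation $V_{\mathrm{spin}}$ of $\mathrm{Spin}_7$, and set $L := k(W/G_0)$. By the definition of stable cohomology, $H^3_s(G_0, \F_2)$ embeds in $H^3(L, \F_2)$, and the inflation $\pi^*\colon H^3_s(E, \F_2) \to H^3_s(G_0, \F_2)$ is realized by triple Milnor symbols in Kummer classes of the characters of $E$ viewed as rational functions on $L$. The crucial structural input I would use is that $\mathrm{Spin}_7$ acts on $V_{\mathrm{spin}}$ with connected principal stabilizer $G_2$, so $V_{\mathrm{spin}}/\mathrm{Spin}_7 \simeq \A^1$ is rational; consequently any class in $H^3(L, \F_2)$ which descends along the tower $V_{\mathrm{spin}}/G_0 \dashrightarrow V_{\mathrm{spin}}/\mathrm{Spin}_7$ must vanish.

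To produce the explicit witness, I would search for a degree-three class in $\Lambda^3 E^\vee$ whose image in $H^3(L, \F_2)$ is a Milnor symbol $\{\chi_i, \chi_j, \chi_k\}$ corresponding to the Arason invariant of a $3$-fold Pfister form that becomes hyperbolic after passing to the $\mathrm{Spin}_7$-invariants; such a class must then be zero in $H^3(L, \F_2)$. The natural source of such Pfister-form trivializations is the Clifford-algebra/triality structure underlying $\mathrm{Spin}_7$. Noncontainment of the resulting $\alpha$ in $\omega \cdot E^\vee$ is then a direct linear algebra check against the explicit basis $\{\omega \cdot x_i\}_{i=1}^{6}$ of $\omega \cdot E^\vee$ computed in Step one.

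The main obstacle is concretely pinning down this witness $\alpha$ and rigorously verifying $\pi^*\alpha = 0$. Matching the universal cohomological invariants of $\mathrm{Spin}_7$ (the Rost invariant and the trivializations of associated Pfister forms) to specific monomials in $\Lambda^3(x_1,\dots,x_6)$ via the Clifford-algebra description of $G_0$ requires careful bookkeeping, and the vanishing needs either a birational analysis of a suitable compactification of $V_{\mathrm{spin}}/G_0$, or an appeal to the structure of cohomological invariants of $B\mathrm{Spin}_7$ translated through the inflation. Tracking the change of basis between the raw sign-change generators of $E$ and the symplectic normal form for $\omega$ is the bookkeeping-heavy step on which the argument hinges.
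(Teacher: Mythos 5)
There is a genuine gap, in fact two. First, you never actually produce the witness, and you are looking for it in the wrong degree: you propose $\alpha\in H^3_{\mathrm{s}}(E,\F_2)=\Lambda^3 E^\vee$, but the paper's witness lives in degree $4$, namely $\zeta=x_2\wedge x_3\wedge y_2\wedge y_3$, which a direct computation identifies as spanning the degree-$4$ part of $\Lambda^\ast E^\vee/(\omega)$. In degree $3$ the complement of $\omega\wedge E^\vee$ is (up to the ideal) accounted for by the span of totally isotropic $3$-vectors, and a class restricting nontrivially to a totally isotropic subgroup $A\subset E$ cannot lie in $K^{G_0}$, because $\pi^{-1}(A)$ is abelian and the restriction of $\pi^\ast$ to it is an isomorphism on stable cohomology. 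So your ``direct linear algebra check against $\{\omega\cdot x_i\}$'' is necessary but nowhere near sufficient, and in degree $3$ there is no reason to expect any class surviving that check to actually die under $\pi^\ast$.

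Second, and more seriously, your mechanism for proving $\pi^\ast\alpha=0$ does not work as stated. Rationality of $V_{\mathrm{spin}}/\mathrm{Spin}_7\simeq\A^1$ (a non--generically-free quotient) does not make classes in $H^3(L,\F_2)$ vanish; what vanishes is the \emph{unramified} cohomology of a stably rational field. The paper's argument therefore has an essential intermediate step that your proposal is missing entirely: one first proves that $\pi^\ast(\zeta)$ is \emph{unramified} in $H^\ast_{\mathrm{s}}(G_0,\Z/2)$, via the centraliser criterion (Proposition \ref{pCriterionUnramified}) together with the observation (Lemma \ref{lMinimal}) that for symplectic spaces of dimension $\le 4$ over $\F_2$ multiplication by the symplectic form is still surjective in the relevant degrees, so the restriction of $\zeta$ to every $S_g$ lands in the ideal generated by $\omega\vert_{S_g}$ modulo forms pulled back from $S_g/\langle g\rangle$. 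Only then does one invoke that $H^\ast_{\mathrm{nr}}(G_0,\Z/2)$ is trivial, which itself requires Kordonskii's stable rationality of generically free linear $\mathrm{Spin}_7$-quotients \emph{and} the $(\mathrm{Spin}_7,G_0)$-section argument (Lemma \ref{lStableEquivalenceG0Spin7}) identifying $G_0$-quotients with $\mathrm{Spin}_7$-quotients up to stable equivalence. Your proposed substitute --- realising $\pi^\ast\alpha$ as the Arason invariant of a Pfister form that becomes hyperbolic over the $\mathrm{Spin}_7$-invariant subfield --- would at best give vanishing over that subfield, not over $L=k(W/G_0)$, and no argument is offered for why such a symbol identity should hold. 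Finally, the assertion $K^{G_0}_2=\langle\omega\rangle$, which is part of the statement, is not addressed at all; the paper proves it by an Amitsur-type argument on the Brauer kernel of the associated Severi--Brauer scheme.
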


This does not seem to be related to the fact that $p=2$ is a special prime; we believe similar examples could very likely be given for every other prime $p$ as well, as will become apparent from the construction in the proof.

\begin{remark}\label{rRelevance}
Theorems \ref{tMain1} and \ref{tMain2} should be seen in the following context, which provided us with motivation for this work.
\begin{enumerate}
\item
As pointed out in \cite[Thm. 11]{BT11}, the Bloch-Kato conjecture (Voevodsky's theorem) implies that, letting $\Gamma=\mathrm{Gal} (k(V/G))$ as above, and denoting 
\[
\Gamma^a = \Gamma/[\Gamma, \Gamma], \quad \Gamma^c= \Gamma/[\Gamma, [\Gamma, \Gamma]],
\]
the natural map $H^* (\Gamma^a, \Z/p) \to H^*(\Gamma , \Z/p)$ is surjective, its kernel $K^{\Gamma^a}$ coincides with the kernel of $H^* (\Gamma^a, \Z/p) \to H^*(\Gamma^c , \Z/p)$, and 
is generated by its degree two component $K^{\Gamma^a}_2$; this follows not obviously from a spectral sequence argument, but in any case directly from the Bloch-Kato conjecture since for $L=k(V/G)$
\[
H^n (\Gamma^a, \Z/p) \simeq (L^*\otimes_{\Z} \dots \otimes_{\Z} L^*)/p, \quad H^n (\Gamma , \Z/p) \simeq K_n (L)/p 
\]
and the Milnor K-group $K_n(L)$ is a quotient of $L^*\otimes_{\Z} \dots \otimes_{\Z} L^*$ by the $n$-th graded piece of the ideal generated by the Steinberg relations in degree two. Thus, whereas on the full profinite level, kernels of inflation maps are generated in degree two, this property is not inherited by finite quotients of the full Galois group.
\item
The importance to understand central extensions of abelian groups for the computation of stable and unramified cohomology in general is explained, for example, in \cite[\S 8]{BT11}. 
The inclusion mentioned in \cite{BT17} after formula 1.2 can be strict.
\end{enumerate}
\end{remark}

\medskip

\noindent
\textbf{Acknowledgments.} The first author was partially supported by the Russian Academic
Excellence Project `5-100', by a Simons Fellowship  and
by  EPSRC programme grant EP/M024830. The third author was partially
supported by NSF grant DMS-1601680, by 
ANR grant ANR-15-CE40-0002-01, and by the Laboratory of Mirror Symmetry NRU HSE, RF Government grant, ag.$\backslash$no.$\backslash$14.641.31.0001.

\section{Some linear algebra}\label{sLinAlg}
We establish some results  concerning the exterior algebra of a symplectic vector space over a field of any characteristic. Most of this is contained in \cite[Ch. VIII, \S 13, 3., p. 203-210]{BourLie}, but since the standing assumption in loc. cit., Ch. VIII, is to work over a field of characteristic $0$ whereas we are interested in the case of a base field of finite characteristic, it is necessary to point out in detail which statements go through unchanged and which ones require adaptation. 

Let $\F$ be any field, and let $V$ be a finite-dimensional $\F$-vector space of even dimension $n=2m$. Suppose that $V$ is symplectic, which means endowed with a non-degenerate alternating bilinear form $\Psi$. Let $\mathrm{Sp}_{2m}(V, \Psi )= \mathrm{Sp}_{2m}$ be the corresponding symplectic group. From e.g. \cite[Prop. 1.8]{EKM08} it follows that $V$ is isometric to an orthogonal direct sum of $m$ hyperbolic planes, in other words there exists a symplectic basis 
\[
(e_1, \dots , e_m, e_{-m}, \dots, e_{-1})
\]
with $\Psi (e_i, e_j) = 0$ unless $i=-j$ when $\Psi (e_i, e_{-i})=1$. This is a statement entirely independent of the characteristic of $\F$, in particular, also holds in characteristic two (the form is then at the same time alternating and symmetric). Let $V^*$ be the dual vector space to $V$, and $(e_i^*)$ the basis dual to the basis $(e_i)$. We identify the alternating form $\Psi$ with an element $\Gamma^*\in \Lambda^2 V^*$. Then 
\begin{gather}\label{fGammaStar}
\Gamma^* = - \sum_{i=1}^m e_i^*\wedge e^*_{-i}. 
\end{gather}
Via the isomorphism $V \to V^*$ given by $\Psi$, the form $\Psi$ induces a symplectic form $\Psi^*$ on $V^*$. Identifying $\Psi^*$ with an element $\Gamma$ in $\Lambda^2 V$, one finds
\begin{gather}\label{fGamma}
\Gamma = \sum_{i=1}^m e_i\wedge e_{-i}. 
\end{gather}
One also denotes by $X_-\colon \Lambda^* V \to \Lambda^* V$ the endomorphism induced by left exterior product with $\Gamma$ and by $X_{+}\colon  \Lambda^* V \to \Lambda^* V$ the endomorphism given by left interior product (contraction) with $-\Gamma^*$; more precisely,
\begin{gather}\label{fContraction}
X_+ (v_1\wedge \dots \wedge v_r) = \sum_{1\le i < j \le r} (-\Psi)(v_i, v_j) (-1)^{i+j} v_1\wedge \dots \wedge \hat{v}_i \wedge \dots \wedge \hat{v}_j\wedge \dots \wedge v_r .
\end{gather}
Moreover, let $H \colon \Lambda^* V \to \Lambda^* V$ be the endomorphism that is multiplication by $(m-r)$ on $\Lambda^r V$ for $0\le r \le 2m$. Then as in \cite[p. 207 and Ex. 19]{BourLie}, it follows that
\begin{align}\label{fSL2}
[X_+, X_-]= -H, \quad [H, X_+]=2X_+, \quad [H, X_-]=-2X_-
\end{align}
so the vector subspace of $\mathrm{End}(\Lambda^* V)$ generated by $X_+, X_-, H$ is a Lie subalgebra isomorphic to $\mathfrak{sl} (2, \F)$. Moreover, for the action of $\mathfrak{sl} (2, \F)$ on $\Lambda^* V$, the subspace $\Lambda^r V$ is the subspace of elements of weight $m-r$.

\medskip

In the following Proposition and its proof, we make the conventions that for integers $i < 0$, $\Lambda^i V := 0$ and for binomial coefficients and positive integers $n$, ${n \choose i }:=0$.

\begin{proposition}\label{pDecompositionUnderSL2}
Put $E_r = (\Lambda^r V) \cap \mathrm{ker}\, X_+$, the ``primitive elements" in $\Lambda^r V$. 
If $p=\mathrm{char}\, \F > \dim V/2 = m$ or $\mathrm{char}\, \F =0$, then 
\begin{enumerate}
\item
for $r \le m-1$, the restriction of $X_-$ to $\Lambda^r V$ is injective;
\item
for $r \ge m-1$, the restriction of $X_-$ to $\Lambda^r V$ induces a surjection from $\Lambda^r V$ onto $\Lambda^{r+2} V$;
\item
for $r\le m$, 
\[
\Lambda^r V = E_r \oplus X_-(\Lambda^{r-2} V) .
\]
\end{enumerate}
Moreover, $E_r$ coincides with the submodule $F_r \subset \Lambda^r V$ defined as the span of all ``completely reducible" $r$-vectors $v_1\wedge \dots \wedge v_r$ such that $\langle v_1, \dots , v_r \rangle$ is a totally isotropic subspace of $V$. Here completely reducible means simply a pure wedge product of the above form $v_1\wedge \dots \wedge v_r$.
\end{proposition}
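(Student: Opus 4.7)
The plan is to invoke the standard $\mathfrak{sl}_2(\F)$-representation theory of $\Lambda^* V$ under the characteristic hypothesis, obtain items \textit{a}--\textit{c} by bookkeeping on the resulting weight-space decomposition, and then handle $E_r=F_r$ via the $\mathrm{Sp}(V,\Psi)$-action. By (\ref{fSL2}), the operators $X_+, X_-, H$ make $\Lambda^* V$ into a finite-dimensional $\mathfrak{sl}_2(\F)$-module with $\Lambda^r V$ the $H$-weight space of weight $m-r$, so all weights lie in $[-m, m]$. Under $\mathrm{char}\,\F=0$ or $p>m$, the arguments of \cite[Ch.~VIII, \S 13]{BourLie} go through verbatim (the required denominators are positive integers at most $m < p$), giving a decomposition $\Lambda^* V = \bigoplus_{0 \le k \le m} V_k \otimes M_k$ into simple $\mathfrak{sl}_2(\F)$-modules $V_k = \mathrm{Sym}^k(\F^2)$ with multiplicity spaces $M_k$. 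In each $V_k$ the weight spaces are one-dimensional at weights $k, k-2, \ldots, -k$, and $X_-$ induces an isomorphism $V_k^{(w)} \to V_k^{(w-2)}$ for $-k < w \le k$ while annihilating $V_k^{(-k)}$.

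With this decomposition $\Lambda^r V = \bigoplus_k V_k^{(m-r)} \otimes M_k$, items \textit{a}--\textit{c} follow by bookkeeping. For \textit{a}, when $r \le m-1$ the weight $m-r \ge 1$ is strictly above the bottom weight $-k$ of any component (since $k \ge 0$), so $X_-$ is injective on each isotypic piece and hence on $\Lambda^r V$. For \textit{b}, when $r \ge m-1$ the target weight $m-r-2 \le -1$ is strictly below the highest weight $k \ge 0$ of every component in which it appears, so $X_-$ maps onto each such weight space. For \textit{c}, the primitive part $E_r$ is precisely the sum of top weight spaces in components with $k = m-r$, while $X_-(\Lambda^{r-2}V)$ collects the non-top weight spaces in components with $k > m-r$; these are complementary inside $\Lambda^r V$.

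For $E_r=F_r$, the inclusion $F_r \subseteq E_r$ is immediate from (\ref{fContraction}): when $\Psi(v_i, v_j) = 0$ for all $i,j$, every summand of $X_+(v_1 \wedge \cdots \wedge v_r)$ vanishes. For the reverse inclusion, I exploit the action of $\mathrm{Sp}(V,\Psi)$, which commutes with $X_\pm$ and $H$ (as these are defined intrinsically from $\Psi$), and hence preserves $E_r$. Meanwhile $F_r$ is an $\mathrm{Sp}(V,\Psi)$-invariant subspace of $E_r$: invariant since $\mathrm{Sp}(V,\Psi)$ carries isotropic $r$-frames to isotropic $r$-frames, and non-zero since it contains $e_1 \wedge \cdots \wedge e_r$. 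Invoking the irreducibility of $E_r$ as an $\mathrm{Sp}(V,\Psi)$-module in the good characteristic range $\mathrm{char}\,\F = 0$ or $p > m$, where it is the $r$-th fundamental Weyl module, one concludes $F_r = E_r$.

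The main obstacle is justifying the irreducibility of $E_r$ as $\mathrm{Sp}(V,\Psi)$-module in positive characteristic $p > m$. In characteristic zero this is classical Weyl theory; in characteristic $p > m$ it follows from the modular representation theory of $\mathrm{Sp}_{2m}$ in good characteristic, but is not entirely elementary. A more self-contained alternative would be an inductive argument on $r$ using a hyperbolic splitting $V = \F e \oplus V' \oplus \F f$ with $V'$ symplectic of dimension $2m-2$: one expands $\omega \in E_r$ in the induced decomposition of $\Lambda^r V$, analyzes the primitivity condition piece-by-piece relative to this splitting, and uses the inductive hypothesis on $V'$ to realize each component as a sum of isotropic pure wedges.
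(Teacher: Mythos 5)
Your overall architecture is plausible, but its first step --- the decomposition $\Lambda^* V=\bigoplus_{k}V_k\otimes M_k$ into simple $\mathfrak{sl}_2(\F)$-modules, asserted to follow ``verbatim'' from \cite{BourLie} --- is precisely the point that cannot be taken for granted in characteristic $p$, and it is essentially equivalent to the statement you are trying to prove. Bourbaki works in characteristic $0$, where complete reducibility is Weyl's theorem; in characteristic $p$ that theorem fails, and checking that the denominators appearing in the formulas are $<p$ does not restore it. Indeed, for restricted $\mathfrak{sl}_2(\F)$-modules one has $\mathrm{Ext}^1\bigl(L(k),L(p-2-k)\bigr)\neq 0$, and when $p$ is close to $m$ (e.g.\ $p=m+1$) both $k$ and $p-2-k$ can be $\le m$, so nonsplit extensions between exactly the simples occurring in $\Lambda^*V$ are not excluded by any general principle. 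The semisimplicity of $\Lambda^*V$ under $p>m$ is in fact true (all weights lie in the closed lowest alcove, so Jantzen's semisimplicity criterion applies), but this requires a genuine argument or citation; as written, your proof assumes its hardest part, and everything after it is routine bookkeeping. A second, lesser issue is that your route to $E_r=F_r$ invokes the irreducibility of $E_r$ as an $\mathrm{Sp}_{2m}$-module, which you rightly flag as nonelementary; one should also be careful that the results of \cite{PS83} concern Weyl modules, so identifying them with $\ker X_+$ (rather than with $F_r$) needs a word of justification to avoid circularity.

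The paper's proof avoids both black boxes by building the decomposition by hand. It starts from the concretely defined spaces $F_r$, whose dimension $\binom{2m}{r}-\binom{2m}{r-2}$ is known unconditionally by \cite{Bruyn09}; the ladder formulas (\ref{fLadderOperators}) (valid because all indices stay $<p$ when $p>m$) show that for a primitive $\epsilon_0\in F_r$ the chain $\epsilon_0,\dots,\epsilon_{m-r}$ consists of nonzero vectors moved up and down by nonzero scalars, so each $X_-^j$ is injective on $F_{r-2j}$ and the sum $F_r+X_-(F_{r-2})+X_-^2(F_{r-4})+\cdots$ is direct. The telescoping count $\sum_j\bigl(\binom{2m}{r-2j}-\binom{2m}{r-2j-2}\bigr)=\binom{2m}{r}$ then forces $\Lambda^rV=F_r\oplus X_-(\Lambda^{r-2}V)$, which delivers a), b), c) \emph{and} $E_r=F_r$ simultaneously --- no complete reducibility and no Premet--Suprunenko needed (Theorem \ref{tPremetSuprunenko} enters only later, in the proof of Theorem \ref{tMain1}). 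If you want to keep your structure, you must either cite the lowest-alcove semisimplicity criterion for the $\mathfrak{sl}_2$-decomposition, or carry out the explicit construction; your sketched induction on $r$ via a hyperbolic splitting is a viable third option but is not carried out.
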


\begin{proof}
The proof is based on the following observations.
\begin{enumerate}
\item[(I)]
Let $E$ be any $\mathfrak{sl} (2, \F)$-module, and let $\epsilon$ be a primitive element, by which we mean, as usual, $X_+(\epsilon )=0$ and $\epsilon$ is an eigenvector for some $\lambda\in \F$ for $H$. Then, as long as $\nu$ is an integer such that $1\le \nu < p$ it does make sense to define
\[
\epsilon_{\nu}= \frac{(-1)^\nu}{\nu}X_-^{\nu} \epsilon, \quad \epsilon_0=\epsilon, \quad  \epsilon_{-1}=0. 
\]
Then a straightforward computation with the commutation relations (\ref{fSL2}), done in \cite[Chapter VIII, \S 1, 2., Prop. 1]{BourLie}, shows that 
\begin{align}
H\epsilon_{\nu} &=(\lambda - 2\bar{\nu}) \epsilon_{\nu} \nonumber \\
X_-\epsilon_{\nu} &= -(\bar{\nu}+1) \epsilon_{\nu +1} \label{fLadderOperators} \\ 
X_+\epsilon_{\nu} &= (\lambda - \bar{\nu} +1) \epsilon_{\nu -1} \nonumber
\end{align}
as long as all indices of the occurring $\epsilon$'s are $< p$. Here we put a bar on an integer to indicate that we consider it as an element of $\F$ via the natural homomorphism $\Z \to \F$, which for us will however be usually not injective.
\item[(II)]
If we define $F_r$ as in the statement of the Proposition, then obviously $F_r\subset E_r$ and
\begin{gather}\label{fDimFr}
\dim F_r= {2m \choose r} - {2m \choose r-2} , \quad 0\le r \le m.
\end{gather}
This is proven in \cite[Thm. 1.1]{Bruyn09} under no assumptions on $p=\mathrm{char}\, \F$. 
\end{enumerate}
For the module $E=\Lambda^* V$ we can thus display the action of the operators $H, X_+, X_-$ schematically in the familiar way:

\[
\xymatrix{
 & \Lambda^{0} V \ar@{->}@(dl,dr)_H \ar@/^/[r]^{X_-}& \Lambda^{2} V \ar@{->}@(dl,dr)_H \ar@/^/[l]^{X_+}  & \dots  & \Lambda^{2m-2} V\ar@{->}@(dl,dr)_H \ar@/^/[r]^{X_-} & \Lambda^{2m} V\ar@{->}@(dl,dr)_H \ar@/^/[l]^{X_+} \\
\textbf{weight:} & \bf{\overline{m}} & \bf{\overline{m-2}} & \dots & \bf{\overline{-(m-2)}} & \bf{\overline{- m}}
}
\]

and

\[
\xymatrix{
 &  V \ar@{->}@(dl,dr)_H \ar@/^/[r]^{X_-}& \Lambda^{3} V \ar@{->}@(dl,dr)_H \ar@/^/[l]^{X_+} & \dots & \Lambda^{2m-3} V\ar@{->}@(dl,dr)_H \ar@/^/[r]^{X_-} & \Lambda^{2m-1} V\ar@{->}@(dl,dr)_H \ar@/^/[l]^{X_+} \\
\textbf{weight:} & \bf{\overline{m-1}} & \bf{\overline{m-3}} & \dots & \bf{\overline{-(m-3)}} & \bf{\overline{- (m-1)}}
}
\]

\medskip

Now we start to use the assumption that $p=\mathrm{char}\, \F > m$. 

If we start with a primitive element $\epsilon=\epsilon_0$ in one of the $F_r$, $0\le r \le m$, of weight $\lambda = \overline{m-r}$ in $\{ \bar{0}, \dots , \bar{m} \}$, then the $\epsilon_{\nu}$ as in item (I) above are all defined for $\nu =0, \dots , m$. Moreover, if $\mu$ is the largest integer such that $\epsilon_{\mu}\neq 0$, then $\mu \le m < p$ and $\mu$ can only possibly be equal to $m$ if we start with $\epsilon_0$ in $F_0$; excluding the latter case for a moment, we can use the third of formulas (\ref{fLadderOperators}) to get
\[
0 = X_+(\epsilon_{\mu +1} ) = (\lambda - \bar{\mu}) \epsilon_{\mu}
\]
where now all indices are still $<p$, and one can only have that $\lambda - \bar{\mu}=0$ in $\F$ if $\mu$ is the unique lift of $\lambda$ in $\{0, \dots , m\}$. If $\mu=m$ and $\epsilon_0\in F_0$, the third of formulas (\ref{fLadderOperators}) still shows that all of $\epsilon_0, \epsilon_1, \dots , \epsilon_m$ must be nonzero (since applying $X_+$ to any of them the appropriate number of times returns a nonzero multiple of $\epsilon_0$ under the standing assumptions), so putting all this together we can say that starting from a primitive $\epsilon_0\in F_r$, $0\le r \le m$, with weight $\lambda=\overline{m-r}$ we get a chain of nonzero vectors

\[
\xymatrix{
 &  \epsilon_0  \ar@/^/[r]^{X_-}& \epsilon_1  \ar@/^/[l]^{X_+} & \dots & \epsilon_{m-r-1} \ar@/^/[r]^{X_-} & \epsilon_{m-r} \ar@/^/[l]^{X_+} \\
weight: & \overline{m-r} & \overline{m-r-2} & \dots & \overline{-(m-r-2)} & \overline{- (m-r)}
}
\]
where according to the formulas in (\ref{fLadderOperators}) and since $p> m$, the $X_+$ and $X_-$ map each of the $\epsilon$'s to a nonzero multiple of the subsequent one ``up or down the ladder" as indicated in the previous diagram. In particular, $X_+$ and $X_-$ induce isomorphisms between the vector subspaces indicated in the following diagram:

\[
\xymatrix{
 &  F_r  \ar@/^/[r]^{X_-}& X_-(F_r)  \ar@/^/[l]^{X_+} & \dots & X_-^{m-r-1}(F_r) \ar@/^/[r]^{X_-} & X_-^{m-r} (F_r) \ar@/^/[l]^{X_+}
 }
\]

for $0\le r\le m$. In addition, the sum of $F_r$, $X_-(F_{r-2})$, $X^2_-(F_{r-4}), \dots$ inside $\Lambda^r V$ (for any $0\le r \le 2m$, noting $F_s=0$ for $s> m$) is direct: this can be seen by repeatedly applying $X_+$ and using the third formula of (\ref{fLadderOperators}). Then a dimension count using item (II) at the beginning of the proof yields
\[
\Lambda^r V = F_r \oplus X_-(F_{r-2}) \oplus X^2_-(F_{r-4}) \oplus \dots = F_r \oplus X_-(\Lambda^{r-2}V)
\]
for any $r$, which proves $E_r=F_r$, c) in the statement of the Proposition as well as a) and b).
\end{proof}

We will need one further piece of information concerning $E_r=F_r$, $0\le r \le m$ later.

\begin{theorem}\label{tPremetSuprunenko}
The $\mathrm{Sp}(2m, \F)$-module $E_r=F_r$, $1\le r \le m$ is irreducible if 
\[
p > m - \frac{r}{2} +1
\]
and if and only if $p$ does not divide
\[
\prod_{0\le j \le r, j \equiv r \, (\mathrm{mod}\, 2)} {m- \frac{r+j}{2} +1 \choose (r-j)/2} .
\]
\end{theorem}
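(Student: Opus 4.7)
The plan is to identify $E_r$ with the $r$-th fundamental Weyl module of the symplectic group $\mathrm{Sp}_{2m}$ and then to invoke the Premet--Suprunenko irreducibility criterion for such modules.

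First I would observe that the $\mathfrak{sl}_2$-triple $(X_+, X_-, H)$ from Proposition \ref{pDecompositionUnderSL2} commutes with the natural $\mathrm{Sp}(2m, \F)$-action on $\Lambda^* V$, because $\Gamma$ and $\Gamma^*$ are $\mathrm{Sp}$-invariant. Hence $E_r = \ker X_+ \cap \Lambda^r V$ is an $\mathrm{Sp}(2m, \F)$-submodule of $\Lambda^r V$. With respect to the Borel subgroup stabilizing the standard isotropic flag $\langle e_1 \rangle \subset \langle e_1, e_2 \rangle \subset \cdots \subset \langle e_1, \ldots, e_m \rangle$ attached to the chosen symplectic basis, the vector $e_1 \wedge \cdots \wedge e_r$ spans a totally isotropic subspace, hence lies in $F_r = E_r$ by the description of primitive elements, and is a highest weight vector of the $r$-th fundamental weight. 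Combined with the dimension count \eqref{fDimFr}, which matches the Weyl dimension formula in type $C_m$, this identifies $E_r$ with the $r$-th fundamental Weyl module for $1 \le r \le m$.

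With this identification in hand, the theorem reduces to the Premet--Suprunenko criterion for the irreducibility of fundamental Weyl modules of $\mathrm{Sp}_{2m}$ in characteristic $p$, which asserts precisely the non-divisibility by $p$ of the displayed product of binomial coefficients. The sufficient bound $p > m - r/2 + 1$ then follows by inspection: each factor $\binom{m - (r+j)/2 + 1}{(r-j)/2}$ in the product has top entry at most $m - r/2 + 1 < p$ (the maximum being attained at $j = 0$ or $j = 1$ according to the parity of $r$), hence is a unit in $\F$, so the whole product is a unit as well.

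The only substantial content is the Premet--Suprunenko theorem itself, which is where the real work sits and which would be the main obstacle in any self-contained treatment. Its proof goes through Jantzen's sum formula in type $C_m$ together with a careful analysis of the Jantzen filtration on fundamental Weyl modules; here we would simply quote it rather than reproduce it.
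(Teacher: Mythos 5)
Your proposal is correct and matches the paper, whose entire proof of this statement is the citation \cite[p.~1313, Thm.~2]{PS83}; the extra glue you supply (identifying $E_r$ with the $r$-th fundamental Weyl module via the highest weight vector $e_1\wedge\dots\wedge e_r$ and the dimension count \eqref{fDimFr}, and deducing the bound $p>m-\frac{r}{2}+1$ from the non-divisibility of the binomial product) is sound and consistent with how the paper uses the result. No gap to report.
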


\begin{proof}
This is \cite[p. 1313, Thm. 2]{PS83}.
\end{proof}

\section{Proofs of main results}\label{sProofs}

We start by recalling that for an abelian $p$-group $A$ 

\begin{gather}\label{fStableCohAbelian}
H^*_{\mathrm{s}}(A, \Z /p) \simeq \Lambda^* A^{\vee} = \Lambda^* H^1 (A, \Z /p) = \Lambda^* \mathrm{Hom}(A, \Z/p).
\end{gather}

See e.g. \cite{Bo05}, Example after Remark 6.10. We now turn to extraspecial groups $G$ sitting in an exact sequence (\ref{fExtensionExtraspecial}), and retain the notation from Section \ref{sIntro}. 

\begin{definition}\label{dTotallyIsotropicSubgroup}
A subgroup $A\subset E$ is called \emph{totally isotropic} if it is a totally isotropic subspace of the $\F_p$-vector space $E$, i.e. the symplectic form $\omega$ vanishes identically on $A$.
\end{definition}

Totally isotropic subgroups $A$ can be characterised as the ones such that $\tilde{A}=\pi^{-1}(A)\subset G$ is abelian of the same rank, where $\pi$ is the natural surjection $\pi\colon G \to E$. 

\begin{proof}[Proof of Theorem \ref{tMain1}]
For a totally isotropic subgroup $A$ of $E$, consider the diagram
\begin{gather}\label{fDiagramRestrictionIsotropic}
\xymatrix{
H^*_{\mathrm{s}}(E, \Z /p)\ar[d]^{r^{E}_A} \ar[r]^{\pi^*} & H^*_{\mathrm{s}}(G, \Z/p)\ar[d]^{r^G_{\tilde{A}}} \\
H^*_{\mathrm{s}} (A, \Z /p) \ar[r]^{\pi^*} & H^*_{\mathrm{s}} (\tilde{A}, \Z/p)
}
\end{gather}
where the vertical arrows are the restriction maps. From the description of the stable cohomology of abelian groups, one gets that the lower horizontal arrow is injective, and in fact an isomorphism. In other words, a class $\alpha \in H^r_{\mathrm{s}}(E, \Z /p)$ that is nontrivial on a totally isotropic subgroup is \emph{not} in the kernel of $\pi^*$. Applying Proposition \ref{pDecompositionUnderSL2} to the symplectic vector space $V= H^1 (E, \Z/p) = E^*$, we see that in order to prove the Theorem it suffices to show that every nonzero class $\alpha\in E_r$, $0\le r\le m$, is nontrivial on some totally isotropic subgroup. Since totally isotropic subgroups are invariant under the action of $\mathrm{Sp}_{2m}(\F_p )$, the classes in $E_r$ that are trivial on all totally isotropic subgroups form a $\mathrm{Sp}_{2m}(\F_p )$ submodule; as $E_r$ is irreducible by Theorem \ref{tPremetSuprunenko}, this submodule is either reduced to zero or everything. Hence it suffices to prove that some class $\alpha\in E_r$, for every $0\le r\le m$, is nontrivial on some totally isotropic subgroup. But this is clear: in the notation introduced at the beginning of Section \ref{sLinAlg}, if we take the totally isotropic subgroup $A=\langle e_1^*, \dots , e_r^*\rangle$, then $\alpha = e_1\wedge \dots \wedge e_r$ is nontrivial on it. 
\end{proof}

For the proof of Theorem \ref{tMain2} we need a few more auxiliary results. Assume $k=\C$ now. As in the statement of that Theorem, consider the preimage $G_0\subset \mathrm{Spin}_7 (k)$ of the diagonal matrices with entries $\pm 1$ in $\mathrm{SO}_7 (k)$.

\begin{lemma}\label{lG0Extraspecial}
$G_0$ is an extraspecial $2$-group sitting in an exact sequence
\[
0 \to \Z/2 \to G_0 \to (\Z/2)^6 \to 0.
\]
\end{lemma}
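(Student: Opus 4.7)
The plan is to use the Clifford algebra model of the $\mathrm{Spin}$ cover. Let $\mathrm{Cl}(k^7, Q)$ be the Clifford algebra of a nondegenerate quadratic form on $k^7$ with orthogonal basis $e_1, \dots, e_7$, so $e_i^2 \in k^{\times}$ and $e_i e_j = -e_j e_i$ for $i \neq j$. Then $\mathrm{Spin}_7 \subset \mathrm{Cl}^{\times}$ is the even part, and the covering $\mathrm{Spin}_7 \to \mathrm{SO}_7$ sends $\pm e_S := \pm \prod_{i \in S} e_i$ (indices in increasing order, $|S|$ even) to the diagonal matrix whose $i$-th entry is $-1$ iff $i \in S$; the kernel is $\{\pm 1\} \cong \Z/2$. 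The diagonal sign matrices of determinant $1$ correspond bijectively to the even subsets of $\{1,\dots,7\}$, giving a subgroup of $\mathrm{SO}_7$ isomorphic to $(\Z/2)^6$, and its preimage is
\[
G_0 = \{\pm e_S : S \subseteq \{1,\dots,7\}, \ |S| \text{ even}\},
\]
of order $2 \cdot 2^6 = 2^{1+6}$, fitting in the asserted extension $0 \to \Z/2 \to G_0 \to (\Z/2)^6 \to 0$.

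Next, a direct Clifford computation---counting the transpositions $e_i \leftrightarrow e_j$ (with $i\neq j$) needed to bring $e_S e_T$ into the form $e_T e_S$---gives for arbitrary subsets $S$, $T$
\[
e_S e_T = (-1)^{|S| \cdot |T| - |S \cap T|}\, e_T e_S.
\]
When $|S|$ and $|T|$ are both even this reduces to $e_S e_T = (-1)^{|S \cap T|} e_T e_S$. In particular all commutators in $G_0$ lie in $\{\pm 1\}$, and since $e_S^2\in k^{\times}\cap G_0=\{\pm 1\}$, the quotient $G_0/\{\pm 1\}$ is elementary abelian of order $2^6$, identified via $\pm e_S \mapsto S$ with the $\F_2$-vector space of even subsets of $\{1,\dots,7\}$ under symmetric difference.

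Finally, to conclude that $G_0$ is extraspecial in the sense of Definition \ref{dExtraspecial}, it remains to show that $Z(G_0)=\{\pm 1\}$, i.e., that the induced skew form $\omega(\bar S,\bar T)=|S\cap T|\bmod 2$ on $(\Z/2)^6$ is nondegenerate. If $S\neq\emptyset$ then $|S|\le 6<7$, so one can pick $i\in S$ and $j\in\{1,\dots,7\}\setminus S$ and set $T=\{i,j\}$; then $|S\cap T|=1$, showing $\pm e_S$ is not central. Hence $Z(G_0)=\{\pm 1\}$ and $G_0$ is extraspecial of order $2^{1+6}$. The only computational ingredient is the Clifford commutator identity; everything else is bookkeeping, though it is worth noting that the dimension $7$ is essential in the last step---inside $\mathrm{SO}_6$ the full-support element $e_{\{1,\dots,6\}}$ would commute with every even $e_T$, so the form would fail to be nondegenerate and the analogous group would not be extraspecial.
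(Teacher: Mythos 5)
Your proof is correct, and it amounts to writing out in full the Clifford-algebra computation that the paper simply delegates to Adams \cite{Ad96}: the sign rule $e_S e_T = (-1)^{|S\cap T|}e_T e_S$ for even subsets and the observation that nondegeneracy of the induced form $|S\cap T|\bmod 2$ hinges on $7$ being odd (failing for $e_{\{1,\dots,6\}}$ inside $\mathrm{SO}_6$) are precisely the content of the cited lemma, and your closing remark matches the paper's own comment about the even Spin groups. The only cosmetic point is that for the literal elements $\pm e_S$ to lie in $\mathrm{Spin}_7$ (and not merely in the even Clifford group up to scalars) one should normalize the orthogonal basis so that $e_i^2=1$, which is possible since $k=\C$.
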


\begin{proof}
The existence of such an exact sequence is clear. The point is to verify that $\Z/2$ is the entire center of $G_0$, and this is done in \cite[Chapter 4, Lemma on p. 22]{Ad96}; for this it is important that $7$ coming from $\mathrm{Spin}_7 (k)$ is odd: the center of the analogously defined groups for the even Spin-groups is larger (the claim that all of them are extraspecial in \cite[5.5, p. 154]{Bak02} is erroneous).
\end{proof}

\begin{lemma}\label{lStableEquivalenceG0Spin7}
Generically free linear $\mathrm{Spin}_7 (k)$-quotients $V/\mathrm{Spin}_7(k)$ and generically free linear $G_0$-quotients $W/G_0$ are stably equivalent. Here $V$ resp. $W$ are any generically free (finite-dimensional, complex) linear representations of $\mathrm{Spin}_7 (k)$ resp. $G_0$.
\end{lemma}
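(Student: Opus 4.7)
My plan is to apply the no-name method of Bogomolov and Katsylo. I will start by recording two preliminary observations. First, any generically free linear $\mathrm{Spin}_7(k)$-representation $V$ restricts to a generically free linear $G_0$-representation; this is because $G_0 \subset \mathrm{Spin}_7(k)$ is finite, so the open locus in $V$ on which $\mathrm{Spin}_7(k)$ acts freely is contained in the locus on which $G_0$ acts freely. Second, the no-name lemma states that for any algebraic group $H$ acting linearly and generically freely on two vector spaces $U_1, U_2$, the quotient $(U_1 \oplus U_2)/H$ is simultaneously birational to a vector bundle over $U_1/H$ with fiber $U_2$ and to a vector bundle over $U_2/H$ with fiber $U_1$. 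Consequently $U_1/H$ and $U_2/H$ are stably birational.

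I will then apply the no-name lemma twice: once to $H = G_0$, which reduces the comparison to the case $W = V|_{G_0}$, and once to $H = \mathrm{Spin}_7(k)$, which allows any convenient choice of generically free $V$. The lemma is thus reduced to proving that $V/\mathrm{Spin}_7(k)$ and $V/G_0$ are stably birational for one specific generically free linear $\mathrm{Spin}_7(k)$-representation $V$. To analyze this, I will study the natural morphism $\phi \colon V/G_0 \to V/\mathrm{Spin}_7(k)$. Over the open locus of free $\mathrm{Spin}_7(k)$-action, $\phi$ is a fiber bundle with fiber $\mathrm{Spin}_7(k)/G_0$, associated to the generic $\mathrm{Spin}_7(k)$-torsor $V \to V/\mathrm{Spin}_7(k)$. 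This is most transparently verified via the $(\mathrm{Spin}_7(k) \times G_0)$-action on $V \times \mathrm{Spin}_7(k)$ given by $(g, h) \cdot (v, s) = (gv, gsh^{-1})$ and computing the quotient in the two orders: quotienting first by $\mathrm{Spin}_7(k)$ yields $V$ (via $(v,s) \mapsto s^{-1}v$) with residual standard $G_0$-action, so the full quotient is $V/G_0$; quotienting first by $G_0$ yields $V \times (\mathrm{Spin}_7(k)/G_0)$ with diagonal residual $\mathrm{Spin}_7(k)$-action, confirming the associated-bundle description.

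The hard part will be to show that this associated $\mathrm{Spin}_7(k)/G_0$-bundle becomes birationally trivial after stabilization, yielding the desired $V/G_0 \sim_{\mathrm{s.b.}} V/\mathrm{Spin}_7(k)$. The delicate point is that $\mathrm{Spin}_7(k)$ is not a \emph{special} group in the sense of Grothendieck-Serre, so generic $\mathrm{Spin}_7(k)$-torsors are not Zariski-locally trivial. The plan to overcome this, following Bogomolov's strategy in related settings, is to enlarge $V$ by summing in additional generically free linear $\mathrm{Spin}_7(k)$-representations -- which by the no-name reduction above does not change the stable birational type of either quotient -- until the associated bundle admits a birational trivialization over a Zariski open subset of $V/\mathrm{Spin}_7(k)$. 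Combined with the no-name reductions of the first two paragraphs, this would give the stable equivalence between $V/\mathrm{Spin}_7(k)$ and $W/G_0$.
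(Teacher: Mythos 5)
Your two no-name reductions are fine, and your description of $V/G_0\to V/\mathrm{Spin}_7(k)$ as the bundle with fibre $\mathrm{Spin}_7(k)/G_0$ associated to the torsor $V\to V/\mathrm{Spin}_7(k)$ is correct. But the proof stops exactly where the lemma begins: the assertion that ``enlarging $V$'' makes this associated bundle birationally trivial is a restatement of the conclusion, not an argument, and you supply no mechanism for it. The tell-tale sign is that nowhere do you use what $G_0$ actually is -- your argument reads verbatim for an arbitrary finite subgroup $H\subset\mathrm{Spin}_7(k)$, and there is no reason for $V\times^{\mathrm{Spin}_7(k)}(\mathrm{Spin}_7(k)/H)$ to trivialize stably for general $H$ (for instance, combined with Kordonskii's stable rationality of $\mathrm{Spin}_7$-quotients this would force $V/H$ to be stably rational for every finite $H\subset\mathrm{Spin}_7(k)$, which nothing in your argument can justify). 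Adding representations kills vector-bundle towers via no-name; it does not trivialize torsors or associated bundles for a non-special group.

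The missing idea is a $(\mathrm{Spin}_7(k),G_0)$-section in the sense of Colliot-Th\'el\`ene--Sansuc, and this is precisely where the definition of $G_0$ as the preimage of the diagonal $\pm1$ matrices enters. The paper takes $R\subset (k^7)^{\oplus 7}$ to be the variety of $7$-tuples of mutually orthogonal vectors (a tower of equivariant vector bundles, so $(V\oplus R)/\mathrm{Spin}_7(k)$ is stably equivalent to $V/\mathrm{Spin}_7(k)$ by no-name), and inside $V\oplus R$ the subvariety $V\times P$ with $P=ke_1\times\dots\times ke_7$. The transporter of a generic point of $V\times P$ in $\mathrm{Spin}_7(k)$ is exactly $G_0$, so by \cite[Theorem 3.1]{CTS07} the map $(V\times P)/G_0\to (V\oplus R)/\mathrm{Spin}_7(k)$ is birational; since $V\times P$ is a generically free linear $G_0$-representation, another application of no-name finishes the proof. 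If you want to salvage your bundle-theoretic formulation, this section is exactly what produces a reduction of the generic $\mathrm{Spin}_7(k)$-torsor to $G_0$, but you must exhibit it; it does not come for free from stabilization.
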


\begin{proof}
This is proven in \cite[\S 3 ff.]{Bo87}, but we include the easy argument for the sake of completeness and give a few more details. 
Consider the standard representation $k^7$ of $\mathrm{SO}_7(k)$; via the natural covering map it is a $\mathrm{Spin}_7 (k)$-representation. Inside $k^7\oplus\dots \oplus k^7$ (seven times), consider the subvariety $R$ of tuples of vectors $(v_1, \dots , v_7)$ that are mutually orthogonal. This is invariant under the group action, and has the structure of a tower of equivariant vector bundles over any of the summands $k^7$. Let $P=ke_1\times \dots \times k e_7\subset  R$ be the Cartesian product of the lines through the standard basis vectors $e_1, \dots , e_7$. Then $P$ is a $(\mathrm{Spin}_7(k), G_0)$-section of the action and \cite[Theorem 3.1]{CTS07} applies; in particular, given any generically free linear $\mathrm{Spin}_7 (k)$-representation $V$, then (a) $(V\oplus R)/\mathrm{Spin}_7(k)$ is stably equivalent to $V/\mathrm{Spin}_7 (k)$ since $R$ has the structure of a tower of equivariant vector bundles and one can apply the ``no-name lemma" \cite[Thm. 3.6]{CTS07}; (b) in $V\oplus R$ the subvariety $V\times P$ is a $(\mathrm{Spin}_7(k), G_0)$-section, whence $(V\oplus R)/\mathrm{Spin}_7(k)$ is birational to $(V\times P )/G_0$. This concludes the proof. 
\end{proof}

\begin{theorem}\label{tTrivUnramified}
Generically free linear $\mathrm{Spin}_7 (k)$-quotients are stably rational; in particular, combining this with Lemma \ref{lStableEquivalenceG0Spin7}, $G_0$ has trivial unramified cohomology.
\end{theorem}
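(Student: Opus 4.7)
The plan is to prove that $V/\mathrm{Spin}_7(k)$ is stably rational for every generically free linear representation $V$; triviality of $H^*_{\mathrm{nr}}(G_0,\Z/p)$ then follows at once, because unramified cohomology is a stable birational invariant that vanishes on affine space, and $W/G_0$ is stably equivalent to such a quotient by Lemma~\ref{lStableEquivalenceG0Spin7}.

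The decisive input is the $8$-dimensional spin representation $S$ of $\mathrm{Spin}_7(k)$, on which the group preserves a non-degenerate quadratic form $q$ and acts transitively on each non-empty level set $\{q=c\}$, $c\in k^*$, with stabilizer conjugate to $G_2$. The strategy is a ``Cayley trick.'' I would first replace $V$ by $V\oplus S$, which is stably equivalent to $V$ by the no-name lemma \cite[Thm.~3.6]{CTS07}, since $S$ is a $\mathrm{Spin}_7(k)$-equivariant vector bundle over a point. Projecting to $S$ and restricting to the open set $S^\circ=\{q\neq 0\}$ gives a $\mathrm{Spin}_7(k)$-equivariant trivial vector bundle with fiber $V$, whose base admits the obvious rational section $c\mapsto v_c$ (any fixed vector scaled so that $q(v_c)=c$) of the orbit map $S^\circ \to S^\circ/\mathrm{Spin}_7(k)\cong \A^1\setminus\{0\}$. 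A second application of the no-name lemma then shows that $V/\mathrm{Spin}_7(k)$ is stably birational to $(\A^1\setminus\{0\})\times V/G_2$, with $V$ now viewed as a $G_2$-representation. This reduces the problem to stable rationality of generically free linear $G_2$-quotients.

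I would then iterate the same device along the classical chain of generic stabilizers
\[
\mathrm{Spin}_7 \;\supset\; G_2 \;\supset\; \mathrm{SL}_3 \;\supset\; \cdots
\]
using the $7$-dimensional fundamental representation of $G_2$ (generic stabilizer $\mathrm{SL}_3$) and then a standard representation of $\mathrm{SL}_3$, at each step reducing the problem to linear quotients of a smaller group. The chain terminates in a quotient that is rational by classical results on $\mathrm{SL}_n$-quotients (for instance, Bogomolov's theorem that $V/\mathrm{SL}_n$ is rational for every generically free $V$).

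The main obstacle I anticipate is verifying the existence of a rational section at every intermediate stage, so that the fibration one obtains over the orbit space is birationally trivial rather than a non-trivial twisted form; without such sections the argument would only produce a twisted homogeneous-space bundle, and stable rationality could fail. In our chain this can be arranged because each successive orbit space is an open subvariety of affine space and the Colliot-Th\'el\`ene--Sansuc section theorem applies, but tracking sections carefully through the iterated no-name-lemma applications will be the heart of the argument.
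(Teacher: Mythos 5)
Your deduction of the second half of the statement is fine: granted stable rationality of generically free $\mathrm{Spin}_7(k)$-quotients, Lemma~\ref{lStableEquivalenceG0Spin7} plus the stable birational invariance of unramified cohomology (this is \cite[Prop.~4.1.4]{CT95}) gives $H^*_{\mathrm{nr}}(G_0,\Z/2)=0$, exactly as in the paper. But the paper does not reprove the hard input; it quotes Kordonskii \cite{Kor00} (see also \cite[\S 4.5]{CTS07}) for the stable rationality. You instead try to prove it by the slice method along $\mathrm{Spin}_7\supset G_2\supset \mathrm{SL}_3$, and there is a genuine gap at the first, decisive step.

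The gap is this: having a rational section $c\mapsto v_c$ of $q\colon S^\circ\to S^\circ/\mathrm{Spin}_7\simeq \A^1\setminus\{0\}$ (your recipe ``scale a fixed vector so that $q=c$'' is not rational --- it needs $\sqrt{c}$ --- though $v_c=ce+f$ for a hyperbolic pair $e,f$ does work) does \emph{not} give $(V\oplus S^\circ)/\mathrm{Spin}_7\approx (\A^1\setminus\{0\})\times V/G_2$. Over $K=k(c)$ the stabilizer $H_c$ of $v_c$ is a $G_2$-subgroup of $\mathrm{Spin}_7$ which is \emph{not} conjugate over $K$ to the standard one: the $G_2$-subgroups are exactly the stabilizers of anisotropic spinor lines, and transporting $\mathrm{Stab}(v_0)$ to $H_c$ by $g\in\mathrm{Spin}_7(K)$ would force $q(g\cdot\lambda v_0)=\lambda^2=c$, i.e.\ $c\in (K^*)^2$. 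Equivalently, if you run the relative-section machinery of \cite[Thm.~3.1]{CTS07} with the slice $V\times\{\lambda v_0:\lambda\neq 0\}$, the group you must quotient by is the stabilizer of the \emph{line}, namely $G_2\times Z(\mathrm{Spin}_7)=G_2\times\Z/2$, where the central element acts by $-1$ on $S$ and necessarily nontrivially on $V$ (the $\mathrm{Spin}_7$-action on $V$ is faithful). So the slice method reduces $V/\mathrm{Spin}_7$ to $V/(G_2\times\Z/2)$ --- equivalently to a family of quadratic twists of $V/G_2$ over the base --- and not to $V/G_2$; the same leftover $\Z/2$ reappears at the next stages (the stabilizer of an anisotropic line in the $7$-dimensional $G_2$-module is $\mathrm{SL}_3\rtimes\Z/2$, etc.), so the chain never terminates in the case you can quote. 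Handling these extra twists is precisely the nontrivial content of \cite{Bo87} and \cite{Kor00}, and your appeal to ``the Colliot-Th\'el\`ene--Sansuc section theorem'' does not remove them. (Also, at the end you should invoke \emph{stable} rationality of $\mathrm{SL}_n$-quotients, which is classical; rationality is not what \cite{Bo87} asserts and is not needed.) To make your route work you would have to control these $\Z/2$-twists explicitly; otherwise, do as the paper does and cite \cite{Kor00}.
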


\begin{proof}
The fact that generically free $\mathrm{Spin}_7\, \C$-quotients are stably rational is proven in \cite{Kor00} (despite the title only referring to $\mathrm{Spin}_{10}$), see also \cite[\S 4.5]{CTS07}. The fact that unramified cohomology of stably rational varieties is trivial is proven, for example, in \cite[Prop. 4.1.4]{CT95}.
\end{proof}

\begin{proposition}\label{pCriterionUnramified}
Let $G$ be a finite group. Suppose that $\alpha\in H^*_{\mathrm{s}}(G, \Z/p)$ is a class whose restriction, for any $g\in G$, to $H^*_{\mathrm{s}}(Z(g), \Z/p)$ is induced from $H^*_{\mathrm{s}}(Z(g)/\langle g\rangle, \Z/p)$; here $Z(g)$ is the centraliser of $g$ in $G$. Then $\alpha$ is unramified. 
\end{proposition}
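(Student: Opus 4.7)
The plan is to verify directly from Definition \ref{dUnramified} that the residue $\partial_\nu(\alpha)$ vanishes for every divisorial valuation $\nu$ of $L = k(V/G)$. So fix such a $\nu$, extend it to a divisorial valuation $\tilde{\nu}$ of $M = k(V)$, and let $D = D(\tilde{\nu}) \subset G$ and $I = I(\tilde{\nu}) \subset D$ denote the associated decomposition and inertia subgroups. The residue field $\kappa(\tilde{\nu})$ is a finitely generated extension of $k$, and I will use the algebraic closure of $k$ in an essential way.

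The key observation is that the $p$-Sylow $I_p \subset I$ is cyclic and is centralized by \emph{all} of $D$. Indeed, the wild inertia is pro-$l$, so $I_p$ injects into the tame quotient of $I$, which embeds $D$-equivariantly into $\mu(\kappa(\tilde{\nu}))$ via the action of $I$ on any uniformizer modulo higher-order terms. All roots of unity of order coprime to $l$ already lie in $k \subset \kappa(\tilde{\nu})$, and $G$ fixes $k$ pointwise, so this $D$-action on $I_p$ is trivial. Writing $I_p = \langle g \rangle$ for some $g \in G$, this yields the containment $D \subset Z(g)$.

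Now the hypothesis applies: $\alpha|_{Z(g)}$ is inflated from $H^*_{\mathrm{s}}(Z(g)/\langle g \rangle, \Z/p)$. By the naturality of inflation under restriction of subgroups (using $D \cap \langle g \rangle = \langle g \rangle$), the class $\alpha|_D$ is inflated from $H^*_{\mathrm{s}}(D/\langle g \rangle, \Z/p) = H^*_{\mathrm{s}}(D/I_p, \Z/p)$. Since $I/I_p$ has order coprime to $p$, its mod-$p$ cohomology is trivial in positive degrees, so the projection $D/I_p \twoheadrightarrow D/I$ induces an isomorphism on mod-$p$ cohomology, and $\alpha|_D$ is therefore inflated from $H^*_{\mathrm{s}}(D/I, \Z/p)$.

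To conclude, translate back via the embedding $H^*_{\mathrm{s}}(G, \Z/p) \hookrightarrow H^*(\mathrm{Gal}(M/L), \Z/p)$ into Galois cohomology. By the standard description of residue maps at divisorial valuations through the decomposition-inertia formalism (cf. e.g. \cite{CT95}), a class whose restriction to $D$ is inflated from $D/I$ is unramified at $\nu$; hence $\partial_\nu(\alpha) = 0$. The chief obstacle is the centralization $D \subset Z(g)$ established in the second paragraph: absent the assumption $k = \bar{k}$, one would obtain only $D \subset N_G(\langle g \rangle)$, which is insufficient for applying the hypothesis, which is phrased in terms of centralizers. Everything else is formal manipulation in group cohomology together with the standard dictionary between stable and Galois cohomology.
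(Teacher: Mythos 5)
Your proof is correct and follows essentially the same route as the paper's: the crux in both arguments is that, because $k$ is algebraically closed, all roots of unity lie in the residue field and hence the decomposition group acts trivially on the (tame) inertia, yielding $D \subset Z(g)$ for $g$ a generator of the relevant cyclic inertia, after which the centralizer hypothesis applies and the restriction to the decomposition group is seen to be inflated from the residue field, killing the residue. The only difference is presentational: you work with the finite decomposition and inertia subgroups $D, I \subset G$ (adding the Sylow/prime-to-$p$ bookkeeping for $I/I_p$, and sharing with the paper the implicit assumption that wild inertia causes no trouble), whereas the paper argues directly with the profinite groups $\mathrm{Gal}(K_v)$, $I \simeq \hat{\Z}$ and the residue exact sequence.
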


\begin{proof}
This follows from the way residue maps in Galois cohomology are defined, see \cite[Chapter II, 7.]{GMS} for the following: for $K=k(V)^G$, $V$ a generically free $G$-representation, and $v$ a geometric discrete valuation of $K$, one considers the completion $K_v$, and the decomposition group $\mathrm{Dec}_w$ where $w$ is an extension of $v$ to the separable closure $K_s$. Then $\mathrm{Gal}(K_v) \simeq \mathrm{Dec}_w \subset \mathrm{Gal}(K)$, and there is a split exact sequence 
\begin{gather}\label{fExtensionDecomposition}
1\to I \to \mathrm{Gal}(K_v) \to \mathrm{Gal}(\kappa_v)\to 1
\end{gather}
where $\kappa_v$ is the residue field of $v$ and $I \simeq \hat{\Z}$ is the topologically cyclic inertia subgroup. For a finite constant $\mathrm{Gal}(K)$-module $C$ of order not divisible by $\mathrm{char}(k)$, there is an exact sequence 
\[
\xymatrix{
0 \ar[r] & H^i (\mathrm{Gal}(\kappa_v), C)\ar[r] &  H^i (\mathrm{Gal}(K_v), C) \ar[r]^{r\quad\quad} & H^{i-1}(\mathrm{Gal}(\kappa_v), \mathrm{Hom}(I, C)) \ar[r] & 0
}
\]
and $r$ is the residue map. The residue of an element $\beta \in H^i (K, C)$ is obtained by restricting to $H^i (K_v, C)$ and afterwards applying $r$. Now under the natural map 
\[
\mathrm{Gal}(K) \to G
\]
the topologically cyclic inertia subgroup $I$ will map to a cyclic subgroup of $G$ generated by some element $g\in G$, and (\ref{fExtensionDecomposition}) being a central extension, $\mathrm{Gal}(K_v)$ will map into the centraliser $Z(g) \subset G$. Now if $C=\Z/p$ and $\alpha \in H^*_{\mathrm{s}}(G, \Z/p)$ is a class whose restriction to $H^*_{\mathrm{s}}(Z(g), \Z/p)$ is induced from $H^*_{\mathrm{s}}(Z(g)/\langle g\rangle, \Z/p)$, then since there is a commutative diagram,
\[
\xymatrix{
\mathrm{Gal}(\kappa_v)\ar[d] & \mathrm{Gal}(K_v)\ar[d]^{\psi} \ar@{->>}[l] \ar@{^{(}->}[r] & \mathrm{Gal}(K) \ar@{->>}[d]^{\tau} \\
    Z(g)/\langle g\rangle &       Z(g)\ar@{->>}[l] \ar@{^{(}->}[r]        & G
}
\]
and a factorisation
\[
\xymatrix{
H^i (Z(g), \Z/p) \ar[rr] \ar[rd] & & H^i (\mathrm{Gal}(K_v), \Z/p)\\
 & H^i_{\mathrm{s}} (Z(g), \Z/p) \ar[ru] &
}
\]
(the latter because $\mathrm{Gal}(K_v)$ sits inside $\mathrm{Gal}(k(V/(Z(g))) \subset \mathrm{Gal}(K)$, $\mathrm{Gal}(k(V/(Z(g)))$ being the preimage of $Z(g)$ under $\tau$), we get that the restriction of $\alpha$ to the decomposition group comes from $H^i (\mathrm{Gal}(\kappa_v), \Z/p)$, hence its residue is zero.
\end{proof}

\begin{proof}[Proof of Theorem \ref{tMain2}]
Let $E=(\Z/2)^6$, $V=E^*$. On $E$ we choose coordinates $x_1, x_2, x_3, y_1, y_2, y_3 \in E^*$ which form a symplectic basis and so that
\[
\omega= \sum_{i=1}^3 x_i\wedge y_i .
\]
To prove the Theorem we are going to proceed in the following Steps. 
\begin{enumerate}
\item[\textbf{Step 1.}]
We produce a class $\zeta \in H^4_{\mathrm{s}}(E, \Z/2)=\Lambda^4 V$ that is not in the ideal generated by $\omega$. More precisely, we will take
\[
\zeta= x_2\wedge x_3\wedge y_2\wedge y_3. 
\]
\item[\textbf{Step 2.}]
We prove that $\pi^* (\zeta ) \in H^4_{\mathrm{s}}(G_0, \Z/2)$ is unramified using the criterion given in Proposition \ref{pCriterionUnramified}. By Theorem \ref{tTrivUnramified}, we conclude $\pi^* (\zeta) =0$ whence $\zeta$ is a class in the kernel of $\pi^*$ not in the ideal generated by $\omega$.
\item[\textbf{Step 3.}]
We check that the kernel of $\pi^* \colon H^2_{\mathrm{s}}(E, \Z/2) \to H^2_{\mathrm{s}}(G_0, \Z/2)$ is spanned by $\omega$.
\end{enumerate}
The assertion in Step 1 is proved by direct computation, which can be done either by hand, or, as we did, using Macaulay2. For example, the following code does that:
\tiny
\begin{verbatim}
i1 : R=ZZ/2[x1,x2,x3,y1,y2,y3,SkewCommutative=>true,Degrees=>{1,1,1,1,1,1}]

o1 = R

o1 : PolynomialRing

i2 : M=matrix{{x1*y1+x2*y2+x3*y3}}

o2 = | x1y1+x2y2+x3y3 |

             1       1
o2 : Matrix R  <--- R

i3 : basis(4,cokernel M)

o3 = | x2x3y2y3 |

o3 : Matrix
\end{verbatim}
\normalsize

\medskip

For Step 2, one uses the following observation, which can be proved by direct computation, either by hand or using Macaulay2 again: 

\begin{lemma}\label{lMinimal}
Suppose $V$ is a symplectic vector space over $\F_2$ of dimension $n=2m \le 4$. Then, for $r\ge m-1$, the restriction of $X_-$ to $\Lambda^{r}V$ is surjective.
\end{lemma}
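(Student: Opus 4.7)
The plan is a straightforward finite case analysis. Under the hypothesis $n = 2m \leq 4$ we have $m \in \{1,2\}$, so the condition $r \geq m-1$ together with the trivial range $0 \leq r \leq 2m$ leaves only finitely many pairs $(m,r)$ to consider. Moreover, whenever $r+2 > 2m$ one has $\Lambda^{r+2} V = 0$ and surjectivity is vacuous. This reduces the claim to the three substantive cases
\[
(m,r) \in \{(1,0),\ (2,1),\ (2,2)\}.
\]
The strategy in each case is simply to fix a symplectic basis $(e_1,\dots,e_m,e_{-m},\dots,e_{-1})$, write $\Gamma = \sum_{i=1}^m e_i \wedge e_{-i}$ as in \eqref{fGamma}, and compute $X_-(\alpha) = \alpha \wedge \Gamma$ on a basis of the source space; since we are in characteristic two, no sign conventions intervene.

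For $(m,r) = (1,0)$, the map $X_- \colon \F_2 \to \Lambda^2 V$ sends $1 \mapsto \Gamma = e_1 \wedge e_{-1}$, a generator of the one-dimensional target, so surjectivity is immediate.

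For $(m,r) = (2,1)$, both $V$ and $\Lambda^3 V$ are four-dimensional, and one checks directly
\[
X_-(e_1) = e_1 \wedge e_2 \wedge e_{-2},\qquad X_-(e_2) = e_1 \wedge e_2 \wedge e_{-1},
\]
\[
X_-(e_{-1}) = e_{-1} \wedge e_2 \wedge e_{-2},\qquad X_-(e_{-2}) = e_1 \wedge e_{-1} \wedge e_{-2},
\]
which exhaust the four natural basis vectors of $\Lambda^3 V$. For $(m,r) = (2,2)$, the target $\Lambda^4 V$ is one-dimensional and already hit by $(e_1 \wedge e_{-1})\wedge \Gamma = e_1 \wedge e_{-1} \wedge e_2 \wedge e_{-2}$.

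There is really no obstacle here beyond correct bookkeeping of the case split; as suggested in the paper text, the entire computation can also be mechanised in a few lines of Macaulay2 by presenting $\Lambda^\ast V$ as the quotient of a skew-commutative polynomial ring over $\F_2$ by the appropriate ideal and inspecting the cokernel of multiplication by $\Gamma$ in each relevant degree.
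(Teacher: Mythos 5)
Your case analysis is exactly the direct computation the paper invokes (it simply asserts the lemma "can be proved by direct computation, either by hand or using Macaulay2"), and your reduction to the cases $(m,r)\in\{(1,0),(2,1),(2,2)\}$ together with the explicit images of $X_-$ on basis vectors is correct. Nothing further is needed.
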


In other words, an extraspecial $2$-group of minimal order for which Proposition \ref{pDecompositionUnderSL2}, b), fails has order that of $G_0$. We now use this to prove the assertion in Step 2. 

\medskip

For an element $g$ in $E$, we denote $\tilde{g}$ any lift of $g$ to $G_0$. Denote the image of the centraliser $Z(\tilde{g})\subset G_0$ in $E$ by $S_g$. It has the following description:
\[
S_g = \left\langle h \in E \mid \omega (g,h) =0 \right\rangle ,
\]
in other words, it consists of all elements $h$ in $E$ whose preimages in $G_0$ commute with $\tilde{g}$. In order to show that $\pi^* (\zeta)$ is unramified in the stable cohomology of $G_0$, it is thus sufficient, by Proposition \ref{pCriterionUnramified}, to show:
\begin{quote}
($\ast$) For any $\tilde{g} \in G_0$, the restriction of $\pi^* (\zeta )$ to $H^*_{\mathrm{s}}(Z(\tilde{g}), \Z /2)$ is induced from $H^*_{\mathrm{s}} (S_g /\langle g \rangle , \Z /2)$ via the natural homomorphisms
\[
Z(\tilde{g}) \to Z(\tilde{g})/\langle \tilde{g} \rangle \to S_g/\langle g \rangle .
\]
\end{quote}
If $g=0$, this is obvious since $\zeta$ comes from $H^*_{\mathrm{s}} (E, \Z /2 )$. Hence we will assume in the sequel that $g\neq 0$. Without loss of generality, we can also assume that $\dim \, S_g \ge 4$ since otherwise $\zeta$ restricts to zero on $S_g$, hence will also be zero on $Z(\tilde{g})$ whence $(\ast)$ is trivially verified. By \cite[Lemma 1.4, Prop. 1.8]{EKM08}, on the subspace $S_g\subset E$ the form $\omega$ will decompose as 
\[
\omega \mid_{S_g} = 0\mid_{\mathfrak{r}_g} \perp (\omega_{g})\mid_{\mathfrak{a}_g}
\]
where $\mathfrak{r}_g\subset S_g$ is the radical of $\omega\mid_{S_g}$ and $\omega_{g}$ is nondegenerate on the complement $\mathfrak{a}_g$. Now we have: 
\begin{enumerate}
\item[(i)]
The subspace $\mathfrak{a}_g$ of $S_g$ on which $\omega$ is nondegenerate is nontrivial because maximal isotropic subspaces of $E$ have dimension $3$ and $\dim \, S_g \ge 4$. 
\item[(ii)]
$1\le \dim \mathfrak{r}_g \le 2$: indeed, $g$ is contained in $\mathfrak{r}_g$, whence $\dim \mathfrak{r}_g\ge 1$; and since $\mathfrak{a}_g$ is nontrivial, $\dim \mathfrak{r}_g\ge 3$ would lead to the contradiction that there would be totally isotropic subspaces of $E$ of dimension strictly greater than $3$ (one could produce one such by taking the subspace generated by $\mathfrak{r}_g$ and any nonzero element in $\mathfrak{a}_g$).
\end{enumerate}

Now let $p_1, \dots , p_r$, $1\le r \le 2$, be coordinates on $\mathfrak{r}_g$ (hence these are elements in $\mathfrak{r}_g^{\vee}$), and let $q_1, \dots , q_a$ be coordinates on $\mathfrak{a}_g$. Moreover, as $\omega_{g}$ is nondegenerate on $\mathfrak{a}_g$, $g\neq 0$ and $\mathfrak{a}_g\neq 0$, $\dim \, \mathfrak{a}_g =2$ or $4$ ($6$ being excluded because $\omega$ is nondegenerate and $g\neq 0$). Writing the restriction of $\zeta$ to $S_g$ in these coordinates, we obtain 
\[
\zeta\mid_{S_g} = \zeta_{q_1, \dots , q_a}^{(4)} + \sum_I \zeta_{I; p_1, \dots , p_r}^{(1)}\zeta^{(3)}_{I; q_1, \dots , q_a} + \sum_J \zeta_{J; p_1, \dots , p_r}^{(2)}\zeta^{(2)}_{J; q_1, \dots , q_a} 
\]
where the notation means the following: the upper index in brackets indicates the degree of the form, and the lower index coordinates indicate the coordinates involved in that form; e.g. $\zeta^{(3)}_{I; q_1, \dots , q_a}$ is a $3$-form in the coordinates $q_1, \dots , q_a$ only, and so on. Note that summands of the form $\zeta_{J; p_1, \dots , p_r}^{(2)}\zeta^{(2)}_{J; q_1, \dots , q_a}$ can only occur if $\rk \, \mathfrak{r}_g=2$ and hence $\rk\, \mathfrak{a}_g=2$ as well because it cannot be rank $4$ as $g\neq 0$. Hence Lemma \ref{lMinimal} always applies to yield that each 
\[
\zeta_{q_1, \dots , q_a}^{(4)}, \quad \zeta^{(3)}_{I; q_1, \dots , q_a} , \quad \zeta^{(2)}_{J; q_1, \dots , q_a} 
\]
maps to zero in $H^*_{\mathrm{s}}(Z(\tilde{g}), \Z /2)$: indeed, the form $\omega\mid_{S_g}$ is the commutator form of the extension $Z(\tilde{g}) \to S_g$, and classes in $H^*_{\mathrm{s}}(S_g, \Z/2)= \Lambda^* S_g^*$ that can be written as $(\omega\mid_{S_g})\wedge$(something) are mapped to zero in $H^*_{\mathrm{s}} (Z(\tilde{g}), \Z/2)$. 

Thus ($\ast$) holds universally, and Step 2 is proved.

\medskip

Finally, for Step 3, note that there are generically free $E$- and $G_0$-representations $V_E$ and $V_{G_0}$ such that, denoting by a superscript $L$ the loci where the actions are free, $(V_{G_0}^L)/G_0$ maps dominantly to $(V_E^L)/E$, and the induced field extension $k(V_{G_0})^G \supset k(V_E)^E$ factors as
\[
\xymatrix{
k(V_{G_0})^G \ar[r]^{\simeq} &k(\mathcal{S})(t)\\
& k(\mathcal{S})\ar@{^{(}->}[u]\\
& k(V_E)^E \ar@{^{(}->}[u]
}
\]
where $\mathcal{S}$ is a Severi-Brauer scheme over $k(V_E)^E$ and $t$ is an indeterminate: indeed, one can take for $V_E$ any generically free $E$-representation, which is at the same time a $G_0$-representation via the homomorphism $G_0\to E$, and for $V_{G_0}$ one takes $W\oplus V_E$, where $W$ is a generically free $G_0$-representation in which the center $\Z/2$ of $G_0$ acts nontrivially via multiplication by scalars. Then the sought-for Severi-Brauer scheme $\mathcal{S}$ is $(\P(W)\oplus V_E^L)/G_0 \to (V_E^L)/E$, over which $(W\backslash \{0\} \oplus V_E^L)/G_0$ is a $k^*$-principal bundle (Zariski locally trivial). Hence the tower of fields above. By Amitsur's theorem \cite[4.5.1]{GS06}, the kernel of $\mathrm{Br}(k(V_E)^E) \to \mathrm{Br}(k(\mathcal{S}))$ is cyclic generated by the class of $\mathcal{S}$, and $\mathrm{Br}(k(\mathcal{S}))\to \mathrm{Br}(k(\mathcal{S})(t))$ is injective. Since the two-torsion in the Brauer group of the fields $\Lambda$ involved here is precisely $H^2 (\Lambda, \Z/2)$, the definition of stable cohomology given in formula (\ref{fUnstable1}) shows that the kernel of $\pi^* \colon H^2_{\mathrm{s}}(E, \Z/2) \to H^2_{\mathrm{s}}(G_0, \Z/2)$ is cyclic. Since it contains the nontrivial class $\omega$, it is generated by $\omega$.
\end{proof}

\begin{remark}\label{fOtherPrimes}
The phenomenon, on which the proof of Theorem \ref{tMain2} is based to a large extent, that $X_-$ as in Proposition \ref{pDecompositionUnderSL2} can fail to be surjective on some $\Lambda^r V$ with $r\ge m-1$ in cases where $p \le m$, is not related to $p=2$, but reoccurs for other primes: it is only to do with the fact that $(X_-)^p=0$ in characteristic $p$. We therefore strongly suspect that examples of the type given in Theorem \ref{tMain2} where $K_2^G$ fails to generate the ideal $K^G$, for some extraspecial group $G$, exist for all primes $p$.
\end{remark}

\end{document}